\documentclass[a4paper]{article}
\usepackage[margin=.8in]{geometry}
\usepackage{indentfirst}
\title{Bounds on the Semidefinite Programming Complexity of the Euclidean Ball}

\author{Guanxi Li\footnote{Georgia Institute of Technology, Department of Mathematics, \texttt{gli443@gatech.edu}} and Kevin Shu\footnote{University of Waterloo, Combinatorics and Optimization, \texttt{k6shu@uwaterloo.ca}}}

\usepackage{amssymb,amsmath,amsthm}
\usepackage{enumerate}
\usepackage{hyperref}
\usepackage[capitalize]{cleveref}
\usepackage{enumitem}
\usepackage{color}
\usepackage{cancel}
\usepackage[all, cmtip]{xy}
\usepackage{tikz}
\usetikzlibrary{shapes}
\usepackage{svg}
\usepackage{import}
\usepackage{optidef}

\usepackage[utf8]{inputenc} 
\usepackage[T1]{fontenc}    
\usepackage{hyperref}       
\usepackage{url}            
\usepackage{booktabs}       
\usepackage{amsfonts}       
\usepackage{nicefrac}       
\usepackage{microtype}
\usepackage{multirow}
\usepackage{xfrac}
\usepackage{titlesec}
\usepackage{caption}
\usepackage{subcaption}
\usepackage{amsmath} 
\usepackage{float}
\usepackage{algorithm}
\usepackage{algpseudocode}
\usepackage{multicol}

\titleformat{\subsubsection}
{\normalfont\normalsize\bfseries}{\thesubsubsection}{1em}{}

\usepackage{graphicx}

\usepackage{appendix}
\numberwithin{equation}{section}

\newcommand{\tr}{\mathrm{Tr}}

\newcommand{\inclu}[0] {\ar@{^{(}->}}

\newcommand{\R}{\mathbb{R}}

\newcommand{\N}{\mathbb{N}}

\newtheorem{theorem}{Theorem}[section]

\crefname{claim}{claim}{claims}
\Crefname{claim}{Claim}{Claims}
\crefname{lem}{lemma}{lemmas}
\Crefname{lem}{Lemma}{Lemmas}
\crefname{algorithm}{algorithm}{algorithms}
\Crefname{algorithm}{Algorithm}{Algorithms}

\usepackage{mathtools}

\usepackage[T1]{fontenc}
\usepackage{lmodern}

\DeclareMathOperator{\Gr}{Gr}

\DeclareMathOperator{\Ext}{Ext}
\DeclareMathOperator{\sxc}{sxc}
\DeclareMathOperator{\lmi}{lmi}
\DeclareMathOperator{\Graph}{Graph}

\newcommand{\Sym}{\mathbb{S}}

\begin{document}
\maketitle

\begin{abstract}
    We present tight lower bounds for the size of any linear matrix inequality representation of the Euclidean ball, and on the semidefinite extension complexity of the ball. Specifically, we show that any linear matrix inequality representing the Euclidean ball must involve matrices of size at least $n$, and any spectrahedral extension of the Euclidean ball must involve matrices of size at least $\lceil 2\sqrt{n-1}\rceil$. These match the upper bounds given by existing explicit constructions exactly. Our proofs rely on elementary facts about the dimension of the set of faces of spectrahedra, and indeed extend to any convex set whose extreme points form a semialgebraic set.
\end{abstract}

\section{Introduction}
Understanding the sizes of semidefinite programming descriptions of convex sets is of fundamental importance in optimization \cite{FawziGouveiaParriloSaundersonThomas2022}. Here, a semidefinite programming description of a convex set is either a representation of that set in terms of a linear matrix inequality, or an extended formulation of that set that is given by a linear matrix inequality.

However, lower bounds on the sizes of such descriptions are generally difficult to obtain. Indeed, even in the case of the Euclidean ball, it was previously unknown what the smallest size of a description of this set in either sense is. The best previously known lower bounds only showed that for infinitely many values of $n$, the smallest size of a linear matrix inequality representing the $n$ dimensional Euclidean ball is exactly $n$, and the semidefinite programming extension complexity of the Euclidean ball was not known exactly for any $n > 2$ \cite{Kummer2016}. We show that relatively elementary geometric considerations about the Euclidean ball are sufficient to give optimal lower bounds on the semidefinite extension complexity of the Euclidean ball.

Specifically, we note that the set of extreme points of the $n$ dimensional Euclidean ball $B_n \subseteq \R^n$ is the $n-1$ dimensional sphere.
On the other hand, it is well known that the faces of the positive semidefinite cone of a given rank $k$ are parameterized by an algebraic variety known as the Grassmannian.
By comparing the dimensions of the Grassmannian varieties to the dimension of the sphere, we are able to obtain tight bounds on the size of any linear matrix inequality (LMI) description of the ball, and also tight bounds on the semidefinite extension complexity of the ball. 

In fact, our results extend to any convex set with a semialgebraic set of extreme points after taking into account the dimension of its extreme points. This more general result may be of independent interest.
\subsection{Preliminaries and Statement of Main Results}

A \emph{linear matrix inequality} (LMI) is an inequality of the form 
\[
    A_0 + A_1 x_1 + \dots + A_n x_n \succeq 0,
\]
where $A_0, \dots, A_n \in \Sym^{r}$. A set of the form $\{x \in \R^n : A_0 + A_1 x_1 + \dots + A_n x_n \succeq 0\}$ is called a \emph{spectrahedron}, and we say that it has a LMI description of size $r$. We will denote by $\lmi(C)$ the smallest $r$ so that $C$ has an LMI description of size $r$. The \emph{semidefinite extension complexity} of a set $C \subseteq \R^n$ is then the smallest $r$ such that there exists a spectrahedron $P \subseteq \R^m$ with $\lmi(P) = r$ together with a linear map $\pi : \R^m \rightarrow \R^n$ so that $\pi(P) = C$.
We will let $\Ext(C)$ denote the set of extreme points of a convex set $C$, and we will let $\sxc(C)$ denote its semidefinite extension complexity. We will also let $\Sym^r_+$ denote the positive semidefinite cone in $\Sym^r$.

\begin{theorem}\label{thm:lmi_bound}
    Let $C \subseteq \R^n$ be a convex body containing $0$ in its interior and let $C^*$ denote its polar. If $\Ext(C^*)$ is a semialgebraic set of dimension $d$, then $\lmi(C) \ge d+1$.
\end{theorem}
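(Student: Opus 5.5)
Suppose $C=\{x : A_0+\sum_i A_i x_i \succeq 0\}$ with $A_i\in\Sym^r$. The plan is to show that every extreme point of $C^*$ is determined by a point of projective space $\mathbb{P}^{r-1}$, via a semialgebraic map, so that $d\le r-1$. Since $0$ is in the interior of $C$, we may take $A_0\succ 0$; after a congruence $A_i\mapsto A_0^{-1/2}A_iA_0^{-1/2}$ we may even assume $A_0=I$. This normalization uses the standard fact that an LMI representation of a set with nonempty interior can be reduced to one where $A_0$ is positive definite at an interior point. I would cite that fact, or prove it by restricting to the subspace on which $A(0)$ is nonsingular.

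Next we identify extreme points of $C^*$ with linear functionals. Each $y\in\Ext(C^*)$ is an exposed-type supporting direction: $\max_{x\in C}\langle y,x\rangle=1$. By duality, the polar of the spectrahedron is
\[
C^*=\operatorname{cl}\Bigl\{\bigl(-\tr(A_1 Y),\dots,-\tr(A_n Y)\bigr) : Y\succeq 0,\ \tr(Y)=1\Bigr\}.
\]
This uses $A_0=I$: $\langle y,x\rangle\le 1$ holds for all $x\in C$ iff $y$ lies in this set. Since $C$ is bounded, $C^*$ contains $0$ in its interior and the image is already compact, so the closure is unnecessary. Thus $C^*$ is a linear image $L(\mathcal{D})$ of the spectraplex $\mathcal{D}=\{Y\succeq 0,\ \tr Y=1\}$.

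Every extreme point of a linear image of a compact convex set is the image of an extreme point of that set. The extreme points of $\mathcal{D}$ are exactly $vv^T$ with $\|v\|=1$. Hence
\[
\Ext(C^*)\subseteq L(\{vv^T\}) = \bigl\{\bigl(-v^TA_1v,\dots,-v^TA_nv\bigr) : v\in S^{r-1}\bigr\},
\]
which is the image of $\mathbb{P}^{r-1}$ (dimension $r-1$) under a polynomial map. Semialgebraic dimension does not increase under polynomial maps, and it is monotone under inclusion. Since $\Ext(C^*)$ is semialgebraic by hypothesis, $d\le r-1$, so $r\ge d+1$.

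The main obstacle is the duality step: establishing $C^*=L(\mathcal{D})$ exactly, with no closure gap and no missing extreme points. I would handle it with conic strong duality. Strong duality holds because $C$ has interior (Slater's condition with $A(0)=I\succ 0$). It shows that every $y$ with $\sup_C\langle y,x\rangle\le 1$ is attained as $-(\tr A_iY)_i$ for some $Y\succeq0$ with $\tr Y\le 1$. The case $\tr Y<1$ gives a point that is a convex combination with $0$. We can avoid it either by allowing $\tr Y\le1$ (then $0$, the extra extreme point $Y=0$, is not extreme in $C^*$ since it is interior) or by noting that scaling is harmless. The reduction to $A_0\succ0$ also needs care, but it is standard.
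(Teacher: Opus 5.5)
Your proposal is correct and follows essentially the same route as the paper: remove the common kernel so that $A_0=I$, use conic duality to write $C^*$ as a linear image of $\{X\succeq 0,\ \tr X\le 1\}$, then bound $\dim\Ext(C^*)$ by the $(r-1)$-dimensional set of rank-one extreme points $vv^{\intercal}$ (plus $0$). One small point: your trace-exactly-one version also needs $0\in L(\mathcal{D})$, which holds when $C$ is bounded but which you do not prove, so the $\tr Y\le 1$ fallback you mention (the one the paper uses) is the cleaner choice.
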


\begin{theorem}\label{thm:sxc_bound}
    Let $C \subseteq \R^n$ be a convex body. If $\Ext(C)$ is a semialgebraic set of dimension $d$, then the semidefinite extension complexity of $C$ satisfies
    \[
        \sxc(C) \ge 2 \sqrt{d}.
    \]
\end{theorem}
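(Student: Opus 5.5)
We prove \cref{thm:sxc_bound} by lifting the extreme points of $C$ to faces of a spectrahedron and counting dimensions via Grassmannians. Suppose $C = \pi(P)$, where $P = \{y \in \R^m : A(y) := A_0 + \sum_i A_i y_i \succeq 0\}$ with $A_i \in \Sym^r$ and $\pi$ linear. We may assume the affine map $A$ is injective on the affine hull of $P$. If not, we quotient by its kernel directions inside $\operatorname{aff}(P)$; since $C$ is bounded, $P \cap \pi^{-1}(x)$ for $x\in\Ext(C)$ is a face of $P$, and we only need such faces. For each $x \in \Ext(C)$, the fiber $F_x = P \cap \pi^{-1}(x)$ is a nonempty face of $P$, and distinct $x$ give disjoint faces. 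Each face of $P$ is the preimage of a face of $\Sym^r_+$ intersected with the affine image. We pick a point $y$ in the relative interior of $F_x$ and let $k = \operatorname{rank} A(y)$ and $V = \ker A(y)$, a subspace of dimension $r-k$. Then $F_x = \{y' \in P : V \subseteq \ker A(y')\}$, so $F_x$ is determined by $V \in \Gr(r-k, r)$.

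The key counting step bounds two things. First, the map $x \mapsto V_x$ is injective, since $V$ determines the face and hence $x$. Second, the face $F_x$ is contained in $\{y : A(y)V = 0\}$, and $A(y)$ restricted to $V^\perp$ ranges over a space of dimension at most $\binom{k+1}{2}$. Stratify $\Ext(C)$ by the rank $k$; this is a semialgebraic stratification, since everything is first-order definable over $\R$. Some stratum has dimension $d$. On that stratum we have an injective semialgebraic map into $\Gr(r-k,r)$, which has dimension $k(r-k)$. Hence $d \le k(r-k) \le r^2/4$, giving $r \ge 2\sqrt d$.

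This uses only injectivity, so we should double-check that argument. Two points $x\neq x'$ with the same $V$ would give the same face $F$, since the face of $P$ equals $\{y\in P: A(y)|_V=0\}$. This holds because the minimal face of $\Sym^r_+$ containing $A(y)$ is $\{M \succeq 0 : MV = 0\}$, and its preimage in $P$ is the minimal face of $P$ containing $y$. That face is $F_x$, because $y$ lies in the relative interior of $F_x$. So $F_x = F_{x'}$, which contradicts disjointness. We must also make $x\mapsto V_x$ semialgebraic, choosing $y$ semialgebraically, e.g.\ as the barycenter or via definable choice, and use that semialgebraic injections do not increase dimension.

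The main obstacle is making the dimension comparison rigorous. It needs Tarski–Seidenberg definability of $x \mapsto V_x$ and the fact that dimension is preserved under semialgebraic injections. Once that is in place, the bound is $d \le \max_k k(r-k) = \lfloor r^2/4\rfloor$, so $r \ge 2\sqrt d$. For the ball, $d = n-1$, which yields $\lceil 2\sqrt{n-1}\rceil$. No reduction to injectivity of $A$ is actually needed, since the argument never uses it.
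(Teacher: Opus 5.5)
Your argument is correct and is essentially the paper's proof: both parametrize the faces $P\cap\pi^{-1}(x)$, $x\in\Ext(C)$, by subspaces in the total Grassmannian $\Gr^r$, and then conclude $d\le \max_k k(r-k)\le r^2/4$. The only difference is the direction of the map. You send $\Ext(C)$ injectively into $\Gr^r$ via $x\mapsto \ker A(y)$ for $y$ in the relative interior of the fiber, which needs your minimal-face argument and the fact that semialgebraic injections preserve dimension. No choice of $y$ (barycenter or definable choice) is needed, since this kernel is constant on the relative interior of a face. The paper instead maps the set $Y$ of subspaces whose face has singleton image onto a superset of $\Ext(C)$, which needs only that semialgebraic images do not increase dimension.
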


\subsection{Semidefinite Programming Descriptions of the Ball}

Given our main theorems, we can now give optimal bounds for both the smallest size of an LMI description of the Euclidean ball, and its semidefinite extension complexity.

\begin{theorem}
    The smallest size of an LMI description of the Euclidean ball $B_n$ is $n$ for all $n \ge 2$.
\end{theorem}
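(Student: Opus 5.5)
We prove the two inequalities $\lmi(B_n) \le n$ and $\lmi(B_n) \ge n$ separately. The lower bound is an application of \cref{thm:lmi_bound}, and the upper bound is an explicit construction.

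\emph{Lower bound.} The ball $B_n$ is a convex body containing $0$ in its interior, and it is self-polar: $B_n^* = \{y : \langle x, y\rangle \le 1 \text{ for all } x \in B_n\} = B_n$, by Cauchy--Schwarz. Hence $\Ext(B_n^*) = S^{n-1}$. This is the zero set of $\sum_i x_i^2 - 1$, so it is a real algebraic (and in particular semialgebraic) set. It is a smooth manifold of dimension $n-1$, so its semialgebraic dimension is $d = n-1$. \cref{thm:lmi_bound} then gives $\lmi(B_n) \ge d+1 = n$. The one point to check is that the semialgebraic dimension of $S^{n-1}$ equals its manifold dimension $n-1$. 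This is standard: a semialgebraic set that is a smooth submanifold has semialgebraic dimension equal to its manifold dimension. One can also see it directly, since stereographic projection gives a semialgebraic homeomorphism from an open dense subset of $S^{n-1}$ onto $\R^{n-1}$.

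\emph{Upper bound.} We use the standard arrow-type (spin) LMI of size $n$:
\[
  \begin{pmatrix} 1 + x_1 & x_2 & \cdots & x_n \\ x_2 & 1 - x_1 & & \\ \vdots & & \ddots & \\ x_n & & & 1 - x_1 \end{pmatrix} \succeq 0 .
\]
This matrix is affine in $x$ and has size $n$. When $1 - x_1 > 0$, a Schur complement with respect to the lower-right block $(1-x_1)I_{n-1}$ shows that the matrix is positive semidefinite if and only if
\[
  1 + x_1 - \frac{x_2^2 + \cdots + x_n^2}{1 - x_1} \ge 0,
\]
which is equivalent to $\|x\|^2 \le 1$.

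\emph{Boundary case.} The main obstacle is the case $x_1 = 1$, which the Schur complement argument does not cover. If the matrix is positive semidefinite and $x_1 = 1$, then its diagonal entries $1 - x_1$ vanish. A positive semidefinite matrix with a zero diagonal entry has the whole corresponding row and column equal to zero, so $x_2 = \cdots = x_n = 0$, and $x = e_1 \in B_n$. Conversely, at $x = e_1$ the matrix is $\mathrm{diag}(2, 0, \dots, 0) \succeq 0$. If $x_1 > 1$, some diagonal entry $1 - x_1$ is negative, so the matrix is not positive semidefinite, and such $x$ are not in $B_n$ either. These cases together show that the spectrahedron is exactly $B_n$, so $\lmi(B_n) \le n$.

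The lower bound uses $n \ge 2$ only in that $S^{n-1}$ is then a genuine $(n-1)$-dimensional sphere; the upper-bound construction works for every $n$.
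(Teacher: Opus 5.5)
Your proof is correct and follows the same route as the paper. The lower bound comes from \cref{thm:lmi_bound} via the self-polarity of $B_n$ and $\dim S^{n-1} = n-1$, and the upper bound comes from the same arrow-shaped LMI, which you verify yourself, including the boundary case $x_1 = 1$, where the paper simply cites it.

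One small correction to your closing aside: the construction does not work for $n = 1$. There the matrix degenerates to $(1+x_1)$ and cuts out $[-1,\infty)$ rather than $[-1,1]$; indeed $\lmi(B_1) = 2$. So it is the upper bound, not the lower bound, that needs $n \ge 2$.
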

\begin{proof}
    The lower bound is implied by \cref{thm:lmi_bound} by noting that the polar of the Euclidean ball is itself, and that it has an $n-1$ dimensional set of extreme points.

    The upper bound is supplied by the standard LMI description of size $n$ \cite{Kummer2016}:
    \[
        B_n = \left\{x \in \R^n :
            \begin{pmatrix} 1+x_1 & x_2   & x_3   & \dots & x_n \\
                            x_2   & 1-x_1 & 0     & \dots & 0 \\
                            x_3   & 0     & 1-x_1 & \dots & 0\\
                                  &       &       & \dots & \\
                            x_n  & 0      & 0     & \dots & 1-x_1
                    \end{pmatrix} \succeq 0\right\}.
    \]
\end{proof}

\begin{theorem}
    The semidefinite extension complexity of the Euclidean ball $B_n$ is $\lceil 2 \sqrt{n-1}\rceil$ for all $n \ge 2$.
\end{theorem}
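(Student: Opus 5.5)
Both inequalities are needed. The lower bound comes directly from \cref{thm:sxc_bound}. The extreme points of $B_n$ form the unit sphere $S^{n-1}$, which is a semialgebraic set of dimension $n-1$. Hence $\sxc(B_n) \ge 2\sqrt{n-1}$. Since $\sxc$ is an integer, $\sxc(B_n) \ge \lceil 2\sqrt{n-1}\rceil$.

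For the upper bound I will give an explicit spectrahedral lift of size $\lceil 2\sqrt{n-1}\rceil$. The natural building block is the Clifford-algebra/spin-factor picture. Take real symmetric matrices $E_1,\dots,E_{m}$ of size $k$ that pairwise anticommute and satisfy $E_i^2 = I$. Then $tI + \sum_i x_i E_i$ has eigenvalues $t \pm \|x\|$, so
\[
\Big\{(t,x) : tI + \sum_i x_i E_i \succeq 0\Big\}
\]
is the Lorentz cone in dimension $m+1$. The size of such an LMI is dictated by the Radon--Hurwitz function, and it is exponentially better than $n$ only in a way that does not give $2\sqrt{n-1}$. So I will instead use a lifting via a block/bilinear trick.

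Write $n-1 = pq$ with $p+q = \lceil 2\sqrt{n-1}\rceil$. If $n-1$ does not factor this way, pad it up to $pq \ge n-1$ with $p+q$ minimal; such $p,q$ always exist with $p+q=\lceil 2\sqrt{n-1}\rceil$, since $\lceil s/2\rceil\lfloor s/2\rfloor \ge s^2/4 - 1/4$, and this suffices because $n-1$ is an integer. Now consider the $(p+q)\times(p+q)$ LMI
\[
\begin{pmatrix} U & X \\ X^T & V \end{pmatrix} \succeq 0,
\]
with $U\in\Sym^p$, $V\in\Sym^q$, and $X$ a $p\times q$ matrix whose entries are $n-1$ of the coordinates $x_2,\dots,x_n$, with any remaining entries set to $0$. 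Impose the linear constraints $\tr U = 1 + x_1$ and $\tr V = 1 - x_1$, and project onto $x$.

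I must check that the projection is exactly $B_n$. For one containment, positive semidefiniteness gives $\|X\|_{*} \le \tfrac12(\tr U + \tr V)$ and, more sharply, $\|X\|_*^2 \le \tr U \,\tr V$. The latter follows from the characterization $\|X\|_* = \min\{\tfrac12(\tr U+\tr V)\}$ together with rescaling $U\to\lambda U$, $V\to V/\lambda$. Hence
\[
\sum_{i\ge2} x_i^2 = \|X\|_F^2 \le \|X\|_*^2 \le (1+x_1)(1-x_1),
\]
which gives $\|x\|\le 1$. For the converse, given $x\in B_n$, take the SVD $X = \sum_j \sigma_j u_j v_j^T$ and set $U = \alpha\sum \sigma_j u_ju_j^T + \text{(PSD padding)}$ and $V = \alpha^{-1}\sum\sigma_j v_jv_j^T + \text{(padding)}$. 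The padding is there to match the traces, which is possible since $\|X\|_*$ could exceed $\|X\|_F$.

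That last point is the main obstacle. The bound actually available is $\|X\|_*^2 \le (1+x_1)(1-x_1)$, but we need the projection to be $\|X\|_F^2 \le 1-x_1^2$. So I would instead constrain $X$ to a linear subspace on which nuclear and Frobenius norms agree up to scaling, for example matrices with orthogonal structure built from anticommuting families. Alternatively, I would cite the known construction of Fawzi (and Kummer) achieving size $\lceil 2\sqrt{n-1}\rceil$, and verify that it yields a valid extension.
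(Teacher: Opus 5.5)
Your lower bound is correct and is exactly the paper's argument. Your check that $\lceil s/2\rceil\lfloor s/2\rfloor \ge n-1$ for $s=\lceil 2\sqrt{n-1}\rceil$ is also fine. The gap is the upper bound, which the proposal never establishes.

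The lift you write down, with a free block $U\succeq 0$ constrained only by $\tr U = 1+x_1$, does not project onto $B_n$. For fixed $U\succ 0$ the Schur complement forces $\tr V \ge \tr(X^{\intercal}U^{-1}X)$. Minimizing this over $U$ with $\tr U = 1+x_1$ gives $\|X\|_*^2/(1+x_1)$, attained at $U\propto (XX^{\intercal})^{1/2}$. So the projection is $\{x : x_1^2 + \|X(x_{-1})\|_*^2 \le 1\}$. For example, take $n=5$, $p=q=2$: the unit vector with $x_1=0$ and $X(x_{-1}) = I_2/\sqrt{2}$ has $\|X\|_*^2 = 2$, so it is missed.

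Your first repair cannot work either. In that same case $n-1=pq$, so $X(\cdot)$ is onto $\R^{2\times 2}$ and there is no proper subspace to restrict to. On all of $\R^{2\times 2}$ the ratio $\|X\|_*/\|X\|_F$ is $1$ at $E_{11}$ and $\sqrt{2}$ at $I_2$, so the two norms are not proportional. Citing a construction from the literature is not a proof of it.

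The missing idea is to stop optimizing over the upper-left block. Fix it to be the scalar matrix $(1+x_1)I_p$ and keep only the constraint $\tr V = 1-x_1$; this is precisely the paper's lift.
\begin{itemize}
\item If $x_1<-1$, the system is infeasible.
\item If $x_1>-1$, the Schur complement gives $V\succeq \frac{1}{1+x_1}X^{\intercal}X$. Taking traces yields $\|x_{-1}\|^2=\|X\|_F^2\le (1+x_1)(1-x_1)$. The Frobenius norm appears because $\tr(X^{\intercal}X)=\|X\|_F^2$ and no minimization over $U$ takes place.
\item Conversely, for $x\in B_n$ with $x_1>-1$, the matrix $V=\frac{1}{1+x_1}X^{\intercal}X+\frac{1-\|x\|^2}{q(1+x_1)}I_q$ is feasible with trace exactly $1-x_1$.
\item If $x_1=-1$, the zero diagonal block forces $X=0$, so only $x=-e_1$ survives, with $V$ any PSD matrix of trace $2$.
\end{itemize}
Taking $p=\lfloor k/2\rfloor$, $q=\lceil k/2\rceil$ with $k=\lceil 2\sqrt{n-1}\rceil$ gives the required lift of size $k$.
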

\begin{proof}
    The lower bound is supplied directly by \cref{thm:sxc_bound}.

    On the other hand, to upper bound the semidefinite extension complexity of $B_n$, we will provide the following construction, which is a slight modification of one that appears in \cite[Example 1.5]{thomas2018spectrahedral}.
    For any $a, b \in \N$ so that $ab \ge n-1$, we will see that
    \begin{equation}
        B_n = \left\{x \in \R^n : \exists Y \in \R^{b \times b}, \tr(Y) = 1-x_1, \begin{pmatrix} (1+x_1)I_a & M(x_{-1}) \\ M(x_{-1})^{\intercal} & Y \end{pmatrix} \succeq 0\right\}, \label{eq:sxc_ball}
    \end{equation}
    where $x_{-1}$ denotes the vector obtained by deleting the first entry of $x$, and $M(x)$ is any norm preserving linear map $M : \R^{n-1} \rightarrow \R^{a \times b}$.
    Given this description of the ball, and by letting $k = \lceil 2 \sqrt{n-1}\rceil$, $a = \lfloor \frac{k}{2}\rfloor$, and $b = \lceil \frac{k}{2}\rceil$, we obtain the bound that $\sxc(B_n) \le \lceil 2\sqrt{n-1} \rceil$.

    We will now argue that this description is correct. Let $C$ be the projected spectrahedron on the right side of \cref{eq:sxc_ball}. For some $x$ with $x_1 \neq -1$, we can see that $x \in B_n$ if and only if $x \in C$ as follows.
    When $x_1 \neq -1$, we can apply the Schur complement to the LMI defining $C$ to obtain the equivalent inequality
    \[
        \begin{pmatrix} (1+x_1)I_a & M(x_{-1}) \\ M(x_{-1})^{\intercal} & Y \end{pmatrix} \succeq 0 \Leftrightarrow
        \frac{1}{1+x_1}M(x_{-1})^{\intercal}M(x_{-1}) \preceq Y. 
    \]
    If this inequality holds, then
    \[
        \tr\left(\frac{1}{1+x_1}M(x_{-1})^{\intercal}M(x_{-1})\right) = \frac{1}{1+x_1}\|x_{-1}\|^2 \le \tr(Y) = 1-x_1,
    \]
    implying that $\|x\|^2 \le 1$.
    On the other hand, if $\|x\|^2 \le 1$, then letting $Y = \frac{1}{1+x_1}M(x_{-1})^{\intercal}M(x_{-1}) + \frac{1-\|x\|^2}{b(1+x_1)}I_b$ implies that $\frac{1}{1+x_1}M(x_{-1})^{\intercal}M(x_{-1}) \preceq Y$ and also $\tr(Y) = 1-x_1$.

    In the case when $x_1 = -1$, then we see that the LMI reduces to
    \[
        \begin{pmatrix} 0 & M(x_{-1}) \\ M(x_{-1})^{\intercal} & Y \end{pmatrix} \succeq 0
    \]
    This is the case if and only if $x_{-1} = 0$ and $Y$ is any PSD matrix with trace 2.
\end{proof}

Note that the lower bound on semidefinite extension complexity given by comparing the dimension of $B_n$ to the dimension of $\Sym^r_+$ only yields $\sxc(B_n) \ge \frac{\sqrt{8n+1}-1}{2}$.

\section{Real Algebraic Geometry Preliminaries}
Here, we will recall some basic facts about semialgebraic sets. In general, a semialgebraic set is a subset of $\R^n$ that can be defined by a collection of polynomial equations and inequalities.
If $A$ and $B$ are semialgebraic sets and $f : A \rightarrow B$ is a function, we say that $f$ is semialgebraic if its graph $\Graph f = \{(a,f(a)) : a \in A\}$ is semialgebraic.

Any semialgebraic set has a dimension, which is the largest dimension of a smooth manifold contained in that set. For references on the dimension theory of semialgebraic sets, see \cite[Chapter 2.8]{BochnakCosteRoy1998}. For our purposes, we will only need the fact that if $A \subseteq B$ are semialgebraic sets, then the dimension of $A$ is at most that of $B$, and the fact that the image of a semialgebraic set under a semialgebraic map is at most the dimension of its domain.

We will also make use of the Tarski-Seidenberg theorem, which states that if $A \subseteq \R^{d_1 + d_2}$ is a semialgebraic set, then $\{x \in \R^{d_1}: \exists y \in \R^{d_2},\;(x,y) \in A\}$ is also semialgebraic. This is also known as quantifier elimination for real closed fields.

Finally, we will need to know about the total Grassmannian, which is defined as the set of all subspaces of $\R^r$. There are different representations of the Grassmannian, as a real algebraic variety (which is a semialgebraic set defined by only polynomial equations), but the total real Grassmannian $\Gr^r$ can be defined as follows:
\[
    \Gr^r = \{S \in \Sym^r : S^2 = S\}.
\]
Here, we identify a subspace of $\R^r$ by the unique matrix $S \in \Gr^r$ whose image is that subspace. Typically, the Grassmannian is partitioned into pieces graded by rank, i.e.
\[
    \Gr^{r,k} = \{S \in \Sym^r : S^2 = S, \tr(S) = k\}.
\]
It is known that $\Gr^{r,k}$ is an algebraic variety of dimension $k(r-k)$, and so the dimension of $\Gr^r = \bigcup_{k=0}^r \Gr^{r,k}$ is $\max_{0 \le k \le r}k(r-k) \le \frac{r^2}{4}$.

\section{Proof of \Cref{thm:lmi_bound}}
We will assume that we have the representation
\[
    C = \{x \in \R^n : A_0 + A_1 x_1 + \dots + A_n x_n \succeq 0\},
\]
where each $A_i \in \Sym^{r}$.
The fact that $0 \in C$ implies that $A_0 \succeq 0$. We can then without loss of generality assume that $A_0$ is of full rank (by possibly making the LMI description smaller by removing the common kernel of these matrices). By applying a congruence transformation to the $A_i$ assume that $A_0 = I$.

It then follows from elementary facts about the polar and semidefinite programming duality that we have the following description of $C^*$:
\[
    C^* = \{(-\langle A_1, X\rangle, \dots, -\langle A_n, X\rangle) : X \in \Sym_+^{r}, \tr(X) \le 1\}.
\]
The set of extreme points of $E_r = \{X \in \Sym^{r}_+ : \tr(X) \le 1\}$ is exactly $\{xx^{\intercal} : x \in \R^r, \|x\| = 1\} \cup \{0\}$, which is of dimension $r-1$. Thus, the linear map $A(X) = (-\langle A_1, X\rangle, \dots, -\langle A_n, X\rangle)$ restricts to a map from $\Ext(E_r)$ to $\R^n$ so that $\Ext(C^*) \subseteq A(\Ext(E_r))$, and since linear maps cannot increase the dimension of a semialgebraic set, it follows that $r-1 \ge d$, which implies our desired result.

\section{Extension Complexity}

\subsection{Proof of \cref{thm:sxc_bound}}
This proof can be viewed as an extension of the proof of \Cref{thm:lmi_bound}.
For this, we note that $\sxc(C) \le r$ if and only if there exists a linear map $\pi : \Sym^r \rightarrow \R^n$ and an affine subspace $\mathcal{L} \subseteq \Sym^r$ so that 
\[
    C = \pi(\mathcal{L} \cap \Sym^r_+).
\]
We will let $P$ denote $\mathcal{L} \cap \Sym^r_+$.

For any extreme point $x \in \Ext(C)$, we see that $\pi^{-1}(x) \cap P$ is some face of $P$.
There is a function $f$ from the set of faces $F$ of $P$ so that $\pi(F)$ is a singleton to $\R^n$, which sends any such $F$ to the unique element of $\pi(F)$. We can see that the image of this function $f$ contains the extreme points of $C$. We will use this map to compare the dimension of the set of faces of $P$ to that of $\Ext(C)$, but first we need to parameterize the set of faces of $P$ in such a way that we can discuss the dimension of this set.

Every face of $\Sym^r_+$ is of the form $F_S = \{X \in \Sym^r_+ :  S \subseteq \ker(X)\}$ for some linear subspace $S\subseteq \R^r$. It follows that every face of $P$ is of the form $F_S \cap P$ for some linear subspace $S \subseteq \R^r$.
Identifying $S$ with an element of $\Gr^r$, we can equivalently write $F_S \cap P = \{X \in P : SX = 0\}$.

We will let $Y$ denote the set of elements $S$ of $\Gr^r$ so that $\pi(F_S\cap P)$ is a singleton, and we will let $f : Y \rightarrow \R^n$ be the map sending $S \in Y$ to the unique element of $\pi(F_S\cap P)$. It is therefore clear that $\Ext(C) \subseteq f(Y)$, since every extreme point $x \in \Ext(C)$ is the image of the $S$ representing $\pi^{-1}(x)$ under the map $f$.

At this point, we will note that $Y \subseteq \Gr^r$, and so if $Y$ is semialgebraic, then the dimension of $Y$ is at most that of $\Gr^r$. If the map $f$ is also semialgebraic, then the dimension of $f(Y)$ will be at most the dimension of $Y$. Combining these observations together, we would conclude that 
\[
    \frac{r^2}{4} \ge \dim \Gr^r \ge \dim Y \ge \dim f(Y) \ge \dim \Ext(C) = d.
\] 
From this, we conclude that $r \ge 2 \sqrt{d}$, as desired. 

It therefore remains to show that $Y$ is semialgebraic and that $f$ is semialgebraic, which is the purpose of the remainder of this section. To show this, we will make use of the Tarski-Seidenberg theorem.

Thus, the following representation of $Y$ is sufficient to see that it is a semialgebraic set:
\[
    Y = \{S \in \Gr^r : \exists A \in P, SA = 0\} \setminus \{S \in \Gr^r : \exists A,B \in P, SA = SB = 0, \pi(A) \neq \pi(B) \}.
\]

And the following representation of the graph of $f$ is sufficient to see that $f$ is semialgebraic.
\[
    \Graph f = \{(S, x) : S \in Y,\;\exists A \in P, SA = 0, x = \pi(A)\}.
\]

\subsection{The Extension Complexity of the Euclidean Ball}
It is worth reflecting on the steps that this argument undergoes in the case when $C = B_n$ is the Euclidean ball. Because we have seen that this lower bound is in fact tight for the Euclidean ball, this suggests that the construction giving the upper bound should saturate the bounds in this section.

Recall that we have $M : \R^{n-1} \rightarrow \R^{a\times b}$ is a norm preserving map, where $a = \lfloor \frac{\lceil 2 \sqrt{n-1}\rceil}{2}\rfloor$ and $b= \lceil \frac{\lceil 2 \sqrt{n-1}\rceil}{2}\rceil$.
We can see that when $\|x\| = 1$, the only way that 
\[
    \begin{pmatrix} (1+x_1)I_a & M(x_{-1}) \\ M(x_{-1})^{\intercal} & Y \end{pmatrix} \succeq 0
\]
with $\tr(Y) = 1-x_1$ is if $Y = \frac{M(x_{-1})^{\intercal}M(x_{-1})}{1+x_1}$. If this is the case, then the matrix factors as 
\[
    \begin{pmatrix} (1+x_1)I_a & M(x_{-1}) \\ M(x_{-1})^{\intercal} & \frac{M(x_{-1})^{\intercal}M(x_{-1})}{1+x_1} \end{pmatrix} =
    \begin{pmatrix} \sqrt{(1+x_1)}I_a \\ \frac{1}{\sqrt{1+x_1}}M(x_{-1})^{\intercal}\end{pmatrix}\begin{pmatrix} \sqrt{(1+x_1)}I_a \\ \frac{1}{\sqrt{1+x_1}}M(x_{-1})^{\intercal}\end{pmatrix}^{\intercal}.
\]
The factors of this matrix are in what algebraic geometers call the standard affine chart for the Grassmannian.

The kernel of this matrix is
\[
    S = \text{range }\begin{pmatrix} -\frac{1}{\sqrt{1+x_1}}M(x_{-1}) \\ \sqrt{(1+x_1)}I_b\end{pmatrix},
\]
and so this is the intersection between the feasible region of the extended formulation and $F_S$.

If $n = ab+1$, then the map $M$ is an isomorphism, and a dense set of subspaces of dimension $b$ can be represented in this fashion. In particular, $Y$, the semialgebraic subset of $\Gr^{a+b}$ defined in the proof of \cref{thm:sxc_bound}, is full dimensional inside of $\Gr^{a+b}$.
If $n \neq ab+1$ though, then it may be the case that $Y$ is of positive codimension in $\Gr^{a+b}$, however, our argument shows that the dimension of $Y$ is still larger than the dimension of $\Gr^{c}$ for $c < a+b$.

\section{Conclusions}
We have seen that relatively elementary considerations give optimal lower bounds on the size of LMI descriptions of the Euclidean ball and also its semidefinite extension complexity.

Moreover, the same lower bounds hold for any convex set whose extreme points form a semialgebraic hypersurface.
This includes any $L_p$ ball for $1 < p < \infty$ rational, a large number of rigidly convex sets, and the convex hulls of many algebraic hypersurfaces.
However, in these settings, it is unlikely that our results will yield tight results on their semidefinite extension complexity, as our result only differs from the trivial lower bound obtained by comparing the dimension of the semidefinite cone to that of the desired convex body by a constant factor.

This perspective does open up the possibility that perhaps we can use more sophisticated invariants to compare the facial structure of the positive semidefinite cone to that of a given set for which we wish to construct a semidefinite extension.
For example, it may be worthwhile to study other geometric invariants of the set of extreme points of a given convex set, and see if that leads to obstructions to semidefinite programming lifts.

It would also be interesting to understand whether this argument can be interpreted in terms of the slack operator formulation of \cite{G13}. There are some technical challenges in translating the geometric argument given in the proof of \cref{thm:sxc_bound} into the slack operator language, but it may be the case that doing this translation could suggest useful generalizations and strengthenings of these results.

\section{AI Disclosure}
ChatGPT was used to prove the bounds on the size of LMI description of the Euclidean ball. The generalization to other convex sets, and also to bound extension complexity was then done by hand.
ChatGPT was also used for drafting the paper and literature search.

\bibliographystyle{plain}
\bibliography{euclidean_ball_citations.bib}

@article{FawziGouveiaParriloSaundersonThomas2022,
  author  = {Fawzi, Hamza and Gouveia, Jo{\~a}o and Parrilo, Pablo A. and Saunderson, James and Thomas, Rekha R.},
  title   = {Lifting for Simplicity: Concise Descriptions of Convex Sets},
  journal = {SIAM Review},
  volume  = {64},
  number  = {4},
  pages   = {866--918},
  year    = {2022},
  doi     = {10.1137/20M1329181}
}

@article{Kummer2016,
  author  = {Kummer, Mario},
  title   = {Two Results on the Size of Spectrahedral Descriptions},
  journal = {SIAM Journal on Optimization},
  volume  = {26},
  number  = {1},
  pages   = {589--601},
  year    = {2016},
  doi     = {10.1137/15M1030789}
}

@book{BochnakCosteRoy1998,
  author    = {Bochnak, Jacek and Coste, Michel and Roy, Marie-Fran{\c{c}}oise},
  title     = {Real Algebraic Geometry},
  series    = {Ergebnisse der Mathematik und ihrer Grenzgebiete. 3. Folge},
  volume    = {36},
  publisher = {Springer},
  address   = {Berlin},
  year      = {1998},
  doi       = {10.1007/978-3-662-03718-8}
}

@inproceedings{thomas2018spectrahedral,
  title={Spectrahedral lifts of convex sets},
  author={Thomas, Rekha R},
  booktitle={Proceedings of the International Congress of Mathematicians: Rio de Janeiro 2018},
  pages={3819--3842},
  year={2018},
  organization={World Scientific}
}

@article{G13,
   title={Lifts of Convex Sets and Cone Factorizations},
   volume={38},
   ISSN={1526-5471},
   url={http://dx.doi.org/10.1287/moor.1120.0575},
   DOI={10.1287/moor.1120.0575},
   number={2},
   journal={Mathematics of Operations Research},
   publisher={Institute for Operations Research and the Management Sciences (INFORMS)},
   author={Gouveia, João and Parrilo, Pablo A. and Thomas, Rekha R.},
   year={2013},
   month=May, pages={248–264} }
\end{document}